\newcommand{\shin}[1]{}
\newcommand{\seo}[1]{}
\def\markboth#1#2{%
  \begingroup
    \@temptokena{{#1}{#2}}\xdef\@themark{\the\@temptokena}%
    \mark{\the\@temptokena}%
  \endgroup
  \if@nobreak\ifvmode\nobreak\fi\fi}
\def\thanks#1{\g@addto@macro\thankses{\thanks{#1}}}
\theoremstyle{plain}
\newtheorem{thm}{Theorem}
\newtheorem{lem}[thm]{Lemma}
\theoremstyle{definition}
\newtheorem*{rmk}{Remark}
\DeclareMathOperator{\MD}{MD}
\newcommand{\abs}[1]{\left|#1\right|}
\newcommand{\set}[1]{\left\{#1\right\}}
\newcommand{\Tp}{\mathcal{T}^{(p)}}
\newcommand{\Y}{\mathcal{Y}^{(p)}}
\newcommand{\F}{\mathcal{F}^{(p)}}
\title{A refined enumeration of $p$-ary labeled trees}
\author{Seunghyun Seo}
\address[Seunghyun Seo]{Department of Mathematics Education, Kangwon National University, Chuncheon 200-701, Korea}
\email{shyunseo@kangwon.ac.kr}
\author{Heesung Shin$^\dag$}
\address[Heesung Shin]{Department of Mathematics, Inha University, Incheon 402-751, Korea}
\email{shin@inha.ac.kr}
\thanks{\dag Corresponding author}
\keywords{$p$-ary labeled tree, Refinement, Maximal decreasing subtree}
\subjclass[2010]{05A15, 05C05, 05C30}
\date{\today}
\begin{document}
\maketitle
\begin{abstract}
Let $\Tp_n$ be the set of $p$-ary labeled trees on $\{1,2,\dots,n\}$. A maximal decreasing subtree of an $p$-ary labeled tree
is defined by the maximal $p$-ary subtree from the root with all edges being decreasing. In this paper, we study a new refinement $\Tp_{n,k}$ of $\Tp_n$, which is the set of $p$-ary labeled trees whose maximal decreasing subtree has $k$ vertices.
\end{abstract}

\section{Introduction}

Let $p$ be a fixed integer greater than $1$. A \emph{$p$-ary tree} $T$ is a tree such that:
\begin{enumerate}
\item Either $T$ is empty or has a distinguished vertex $r$ which is called the root of $T$, and
\item $T-r$ consists of a weak ordered partition $(T_1,\dots,T_p)$ of $p$-ary trees.
\end{enumerate}
A $2$-ary(resp.~$3$-ary) tree is called binary(resp.~ternary) tree.
Figure~\ref{fig:ternary} exhibits all the ternary tree with $3$ vertices.
A {\em full $p$-ary tree} is a $p$-ary tree, where each vertex has either $0$ or $p$ children.
It is well known (see \cite[6.2.2 Proposition]{Sta99}) that the number of full
$p$-ary trees with $n$ internal vertices is given by the $n$th order-$p$ Fuss-Catalan number \cite[p.~361]{GKP89} $C_n^{(p)} = \frac{1}{pn+1} \binom{pn+1}{n}$.\seo{참고문헌추가}
Clearly a full $p$-ary tree $T$ with $m$ internal vertices corresponds to a $p$-ary tree  with $m$ vertices by deleting all the leaves in $T$, so the number of $p$-ary trees with $n$ vertices is also $C_n^{(p)}$.

\begin{figure}[h]
\centering
\ifpdf
\begin{pgfpicture}{17.00mm}{17.00mm}{158.00mm}{44.00mm}
\pgfsetxvec{\pgfpoint{1.00mm}{0mm}}
\pgfsetyvec{\pgfpoint{0mm}{1.00mm}}
\color[rgb]{0,0,0}\pgfsetlinewidth{0.30mm}\pgfsetdash{}{0mm}
\pgfmoveto{\pgfxy(25.00,40.00)}\pgflineto{\pgfxy(20.00,30.00)}\pgfstroke
\pgfmoveto{\pgfxy(25.00,40.00)}\pgflineto{\pgfxy(25.00,30.00)}\pgfstroke
\pgfmoveto{\pgfxy(35.00,40.00)}\pgflineto{\pgfxy(30.00,30.00)}\pgfstroke
\pgfmoveto{\pgfxy(35.00,40.00)}\pgflineto{\pgfxy(40.00,30.00)}\pgfstroke
\pgfmoveto{\pgfxy(45.00,40.00)}\pgflineto{\pgfxy(45.00,30.00)}\pgfstroke
\pgfmoveto{\pgfxy(45.00,40.00)}\pgflineto{\pgfxy(50.00,30.00)}\pgfstroke
\pgfmoveto{\pgfxy(65.00,40.00)}\pgflineto{\pgfxy(60.00,30.00)}\pgfstroke
\pgfmoveto{\pgfxy(60.00,30.00)}\pgflineto{\pgfxy(55.00,20.00)}\pgfstroke
\pgfmoveto{\pgfxy(62.00,33.00)}\pgflineto{\pgfxy(62.00,33.00)}\pgfstroke
\pgfmoveto{\pgfxy(75.00,40.00)}\pgflineto{\pgfxy(70.00,30.00)}\pgfstroke
\pgfmoveto{\pgfxy(70.00,30.00)}\pgflineto{\pgfxy(70.00,20.00)}\pgfstroke
\pgfmoveto{\pgfxy(85.00,40.00)}\pgflineto{\pgfxy(80.00,30.00)}\pgfstroke
\pgfmoveto{\pgfxy(80.00,30.00)}\pgflineto{\pgfxy(85.00,20.00)}\pgfstroke
\pgfmoveto{\pgfxy(95.00,40.00)}\pgflineto{\pgfxy(95.00,30.00)}\pgfstroke
\pgfmoveto{\pgfxy(95.00,30.00)}\pgflineto{\pgfxy(90.00,20.00)}\pgfstroke
\pgfmoveto{\pgfxy(105.00,40.00)}\pgflineto{\pgfxy(105.00,30.00)}\pgfstroke
\pgfmoveto{\pgfxy(105.00,30.00)}\pgflineto{\pgfxy(105.00,20.00)}\pgfstroke
\pgfmoveto{\pgfxy(115.00,40.00)}\pgflineto{\pgfxy(115.00,30.00)}\pgfstroke
\pgfmoveto{\pgfxy(115.00,30.00)}\pgflineto{\pgfxy(120.00,20.00)}\pgfstroke
\pgfmoveto{\pgfxy(125.00,40.00)}\pgflineto{\pgfxy(130.00,30.00)}\pgfstroke
\pgfmoveto{\pgfxy(130.00,30.00)}\pgflineto{\pgfxy(125.00,20.00)}\pgfstroke
\pgfmoveto{\pgfxy(135.00,40.00)}\pgflineto{\pgfxy(140.00,30.00)}\pgfstroke
\pgfmoveto{\pgfxy(140.00,30.00)}\pgflineto{\pgfxy(140.00,20.00)}\pgfstroke
\pgfmoveto{\pgfxy(145.00,40.00)}\pgflineto{\pgfxy(150.00,30.00)}\pgfstroke
\pgfmoveto{\pgfxy(150.00,30.00)}\pgflineto{\pgfxy(155.00,20.00)}\pgfstroke
\pgfcircle[fill]{\pgfxy(25.00,40.00)}{1.00mm}
\pgfcircle[stroke]{\pgfxy(25.00,40.00)}{1.00mm}
\pgfcircle[fill]{\pgfxy(20.00,30.00)}{1.00mm}
\pgfcircle[stroke]{\pgfxy(20.00,30.00)}{1.00mm}
\pgfcircle[fill]{\pgfxy(25.00,30.00)}{1.00mm}
\pgfcircle[stroke]{\pgfxy(25.00,30.00)}{1.00mm}
\pgfcircle[fill]{\pgfxy(30.00,30.00)}{1.00mm}
\pgfcircle[stroke]{\pgfxy(30.00,30.00)}{1.00mm}
\pgfcircle[fill]{\pgfxy(35.00,40.00)}{1.00mm}
\pgfcircle[stroke]{\pgfxy(35.00,40.00)}{1.00mm}
\pgfcircle[fill]{\pgfxy(45.00,40.00)}{1.00mm}
\pgfcircle[stroke]{\pgfxy(45.00,40.00)}{1.00mm}
\pgfcircle[fill]{\pgfxy(40.00,30.00)}{1.00mm}
\pgfcircle[stroke]{\pgfxy(40.00,30.00)}{1.00mm}
\pgfcircle[fill]{\pgfxy(45.00,30.00)}{1.00mm}
\pgfcircle[stroke]{\pgfxy(45.00,30.00)}{1.00mm}
\pgfcircle[fill]{\pgfxy(50.00,30.00)}{1.00mm}
\pgfcircle[stroke]{\pgfxy(50.00,30.00)}{1.00mm}
\pgfcircle[fill]{\pgfxy(55.00,20.00)}{1.00mm}
\pgfcircle[stroke]{\pgfxy(55.00,20.00)}{1.00mm}
\pgfcircle[fill]{\pgfxy(60.00,30.00)}{1.00mm}
\pgfcircle[stroke]{\pgfxy(60.00,30.00)}{1.00mm}
\pgfcircle[fill]{\pgfxy(65.00,40.00)}{1.00mm}
\pgfcircle[stroke]{\pgfxy(65.00,40.00)}{1.00mm}
\pgfcircle[fill]{\pgfxy(75.00,40.00)}{1.00mm}
\pgfcircle[stroke]{\pgfxy(75.00,40.00)}{1.00mm}
\pgfcircle[fill]{\pgfxy(70.00,30.00)}{1.00mm}
\pgfcircle[stroke]{\pgfxy(70.00,30.00)}{1.00mm}
\pgfcircle[fill]{\pgfxy(70.00,20.00)}{1.00mm}
\pgfcircle[stroke]{\pgfxy(70.00,20.00)}{1.00mm}
\pgfcircle[fill]{\pgfxy(85.00,40.00)}{1.00mm}
\pgfcircle[stroke]{\pgfxy(85.00,40.00)}{1.00mm}
\pgfcircle[fill]{\pgfxy(80.00,30.00)}{1.00mm}
\pgfcircle[stroke]{\pgfxy(80.00,30.00)}{1.00mm}
\pgfcircle[fill]{\pgfxy(85.00,20.00)}{1.00mm}
\pgfcircle[stroke]{\pgfxy(85.00,20.00)}{1.00mm}
\pgfcircle[fill]{\pgfxy(95.00,40.00)}{1.00mm}
\pgfcircle[stroke]{\pgfxy(95.00,40.00)}{1.00mm}
\pgfcircle[fill]{\pgfxy(95.00,30.00)}{1.00mm}
\pgfcircle[stroke]{\pgfxy(95.00,30.00)}{1.00mm}
\pgfcircle[fill]{\pgfxy(90.00,20.00)}{1.00mm}
\pgfcircle[stroke]{\pgfxy(90.00,20.00)}{1.00mm}
\pgfcircle[fill]{\pgfxy(105.00,40.00)}{1.00mm}
\pgfcircle[stroke]{\pgfxy(105.00,40.00)}{1.00mm}
\pgfcircle[fill]{\pgfxy(105.00,30.00)}{1.00mm}
\pgfcircle[stroke]{\pgfxy(105.00,30.00)}{1.00mm}
\pgfcircle[fill]{\pgfxy(105.00,20.00)}{1.00mm}
\pgfcircle[stroke]{\pgfxy(105.00,20.00)}{1.00mm}
\pgfcircle[fill]{\pgfxy(115.00,40.00)}{1.00mm}
\pgfcircle[stroke]{\pgfxy(115.00,40.00)}{1.00mm}
\pgfcircle[fill]{\pgfxy(115.00,30.00)}{1.00mm}
\pgfcircle[stroke]{\pgfxy(115.00,30.00)}{1.00mm}
\pgfcircle[fill]{\pgfxy(120.00,20.00)}{1.00mm}
\pgfcircle[stroke]{\pgfxy(120.00,20.00)}{1.00mm}
\pgfcircle[fill]{\pgfxy(125.00,40.00)}{1.00mm}
\pgfcircle[stroke]{\pgfxy(125.00,40.00)}{1.00mm}
\pgfcircle[fill]{\pgfxy(130.00,30.00)}{1.00mm}
\pgfcircle[stroke]{\pgfxy(130.00,30.00)}{1.00mm}
\pgfcircle[fill]{\pgfxy(125.00,20.00)}{1.00mm}
\pgfcircle[stroke]{\pgfxy(125.00,20.00)}{1.00mm}
\pgfcircle[fill]{\pgfxy(135.00,40.00)}{1.00mm}
\pgfcircle[stroke]{\pgfxy(135.00,40.00)}{1.00mm}
\pgfcircle[fill]{\pgfxy(140.00,30.00)}{1.00mm}
\pgfcircle[stroke]{\pgfxy(140.00,30.00)}{1.00mm}
\pgfcircle[fill]{\pgfxy(140.00,20.00)}{1.00mm}
\pgfcircle[stroke]{\pgfxy(140.00,20.00)}{1.00mm}
\pgfcircle[fill]{\pgfxy(145.00,40.00)}{1.00mm}
\pgfcircle[stroke]{\pgfxy(145.00,40.00)}{1.00mm}
\pgfcircle[fill]{\pgfxy(150.00,30.00)}{1.00mm}
\pgfcircle[stroke]{\pgfxy(150.00,30.00)}{1.00mm}
\pgfcircle[fill]{\pgfxy(155.00,20.00)}{1.00mm}
\pgfcircle[stroke]{\pgfxy(155.00,20.00)}{1.00mm}
\pgfcircle[stroke]{\pgfxy(25.00,40.00)}{2.00mm}
\pgfcircle[stroke]{\pgfxy(35.00,40.00)}{2.00mm}
\pgfcircle[stroke]{\pgfxy(45.00,40.00)}{2.00mm}
\pgfcircle[stroke]{\pgfxy(65.00,40.00)}{2.00mm}
\pgfcircle[stroke]{\pgfxy(75.00,40.00)}{2.00mm}
\pgfcircle[stroke]{\pgfxy(85.00,40.00)}{2.00mm}
\pgfcircle[stroke]{\pgfxy(95.00,40.00)}{2.00mm}
\pgfcircle[stroke]{\pgfxy(105.00,40.00)}{2.00mm}
\pgfcircle[stroke]{\pgfxy(115.00,40.00)}{2.00mm}
\pgfcircle[stroke]{\pgfxy(125.00,40.00)}{2.00mm}
\pgfcircle[stroke]{\pgfxy(135.00,40.00)}{2.00mm}
\pgfcircle[stroke]{\pgfxy(145.00,40.00)}{2.00mm}
\end{pgfpicture}%
\else
  \setlength{\unitlength}{1bp}%
  \begin{picture}(399.69, 76.54)(0,0)
  \put(0,0){\includegraphics{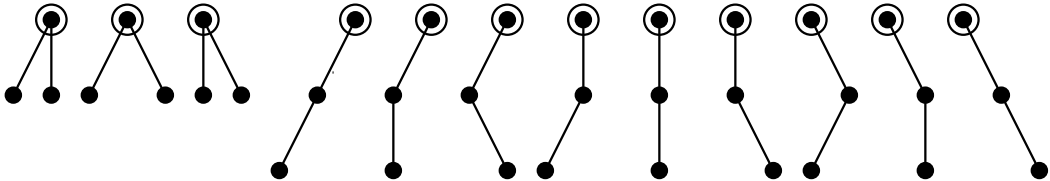}}
  \end{picture}%
\fi
\caption{All 12 ternary trees with $3$ vertices}
\seo{동그란 원으로 root를 명시하면 어떨지.. All ternary 사이에 12를 넣었음.}
\shin{root 넣었습니다.}
\label{fig:ternary}
\end{figure}

An {\em $p$-ary labeled tree} is a $p$-ary tree whose vertices are labeled by distinct positive integers. In most cases, a $p$-ary labeled tree with $n$ vertices is identified with an $p$-ary tree on the vertex set $[n]:=\{1,2,\dots,n\}$.
Let $\Tp_n$ be the set of $p$-ary labeled trees on $[n]$. Clearly the cardinality of $\Tp_n$ is given by
\begin{equation} \label{eq:pcat}
|\Tp_n| = n!\, C_n^{(p)} = (pn)_{(n-1)},
\end{equation}
where $m_{(k)}:=m(m-1)\cdots(m-k+1)$ is a falling factorial.

For a given $p$-ary labeled tree $T$, a \emph{maximal decreasing subtree} of $T$ is defined by the maximal $p$-ary subtree from the root with all edges being decreasing, denoted by $\MD(T)$. Figure~\ref{fig:tree} illustrates the maximal decreasing subtree of a given ternary tree $T$. Let $\Tp_{n,k}$ be the set of $p$-ary labeled trees on $[n]$ with its maximal decreasing subtree having $k$ vertices.
\begin{figure}[t]
\centering
\ifpdf
\begin{pgfpicture}{17.00mm}{16.14mm}{126.00mm}{54.14mm}
\pgfsetxvec{\pgfpoint{1.00mm}{0mm}}
\pgfsetyvec{\pgfpoint{0mm}{1.00mm}}
\color[rgb]{0,0,0}\pgfsetlinewidth{0.30mm}\pgfsetdash{}{0mm}
\pgfcircle[fill]{\pgfxy(40.00,50.00)}{1.00mm}
\pgfcircle[stroke]{\pgfxy(40.00,50.00)}{1.00mm}
\pgfcircle[fill]{\pgfxy(30.00,40.00)}{1.00mm}
\pgfcircle[stroke]{\pgfxy(30.00,40.00)}{1.00mm}
\pgfcircle[fill]{\pgfxy(40.00,40.00)}{1.00mm}
\pgfcircle[stroke]{\pgfxy(40.00,40.00)}{1.00mm}
\pgfcircle[fill]{\pgfxy(50.00,40.00)}{1.00mm}
\pgfcircle[stroke]{\pgfxy(50.00,40.00)}{1.00mm}
\pgfcircle[fill]{\pgfxy(50.00,30.00)}{1.00mm}
\pgfcircle[stroke]{\pgfxy(50.00,30.00)}{1.00mm}
\pgfcircle[fill]{\pgfxy(60.00,30.00)}{1.00mm}
\pgfcircle[stroke]{\pgfxy(60.00,30.00)}{1.00mm}
\pgfcircle[fill]{\pgfxy(30.00,30.00)}{1.00mm}
\pgfcircle[stroke]{\pgfxy(30.00,30.00)}{1.00mm}
\pgfcircle[fill]{\pgfxy(50.00,20.00)}{1.00mm}
\pgfcircle[stroke]{\pgfxy(50.00,20.00)}{1.00mm}
\pgfcircle[fill]{\pgfxy(70.00,20.00)}{1.00mm}
\pgfcircle[stroke]{\pgfxy(70.00,20.00)}{1.00mm}
\pgfcircle[fill]{\pgfxy(30.00,20.00)}{1.00mm}
\pgfcircle[stroke]{\pgfxy(30.00,20.00)}{1.00mm}
\pgfcircle[fill]{\pgfxy(20.00,20.00)}{1.00mm}
\pgfcircle[stroke]{\pgfxy(20.00,20.00)}{1.00mm}
\pgfmoveto{\pgfxy(40.00,50.00)}\pgflineto{\pgfxy(40.00,40.00)}\pgfstroke
\pgfmoveto{\pgfxy(40.00,50.00)}\pgflineto{\pgfxy(50.00,40.00)}\pgfstroke
\pgfmoveto{\pgfxy(50.00,40.00)}\pgflineto{\pgfxy(60.00,30.00)}\pgfstroke
\pgfmoveto{\pgfxy(60.00,30.00)}\pgflineto{\pgfxy(70.00,20.00)}\pgfstroke
\pgfmoveto{\pgfxy(60.00,30.00)}\pgflineto{\pgfxy(50.00,20.00)}\pgfstroke
\pgfmoveto{\pgfxy(50.00,40.00)}\pgflineto{\pgfxy(50.00,30.00)}\pgfstroke
\pgfmoveto{\pgfxy(40.00,50.00)}\pgflineto{\pgfxy(30.00,40.00)}\pgfstroke
\pgfmoveto{\pgfxy(30.00,40.00)}\pgflineto{\pgfxy(30.00,30.00)}\pgfstroke
\pgfmoveto{\pgfxy(30.00,30.00)}\pgflineto{\pgfxy(30.00,20.00)}\pgfstroke
\pgfmoveto{\pgfxy(30.00,30.00)}\pgflineto{\pgfxy(20.00,20.00)}\pgfstroke
\pgfputat{\pgfxy(32.00,39.00)}{\pgfbox[bottom,left]{\fontsize{11.38}{13.66}\selectfont 8}}
\pgfputat{\pgfxy(22.00,19.00)}{\pgfbox[bottom,left]{\fontsize{11.38}{13.66}\selectfont 7}}
\pgfputat{\pgfxy(52.00,19.00)}{\pgfbox[bottom,left]{\fontsize{11.38}{13.66}\selectfont 11}}
\pgfputat{\pgfxy(52.00,29.00)}{\pgfbox[bottom,left]{\fontsize{11.38}{13.66}\selectfont 3}}
\pgfputat{\pgfxy(32.00,19.00)}{\pgfbox[bottom,left]{\fontsize{11.38}{13.66}\selectfont 10}}
\pgfputat{\pgfxy(72.00,19.00)}{\pgfbox[bottom,left]{\fontsize{11.38}{13.66}\selectfont 5}}
\pgfputat{\pgfxy(52.00,39.00)}{\pgfbox[bottom,left]{\fontsize{11.38}{13.66}\selectfont 4}}
\pgfputat{\pgfxy(62.00,29.00)}{\pgfbox[bottom,left]{\fontsize{11.38}{13.66}\selectfont 6}}
\pgfputat{\pgfxy(43.00,49.00)}{\pgfbox[bottom,left]{\fontsize{11.38}{13.66}\selectfont 9}}
\pgfputat{\pgfxy(32.00,29.00)}{\pgfbox[bottom,left]{\fontsize{11.38}{13.66}\selectfont 1}}
\pgfputat{\pgfxy(42.00,39.00)}{\pgfbox[bottom,left]{\fontsize{11.38}{13.66}\selectfont 2}}
\pgfputat{\pgfxy(20.00,49.00)}{\pgfbox[bottom,left]{\fontsize{11.38}{13.66}\selectfont $T$}}
\pgfputat{\pgfxy(90.00,49.00)}{\pgfbox[bottom,left]{\fontsize{11.38}{13.66}\selectfont $\MD(T)$}}
\pgfsetlinewidth{1.20mm}\pgfmoveto{\pgfxy(75.00,35.00)}\pgflineto{\pgfxy(85.00,35.00)}\pgfstroke
\pgfmoveto{\pgfxy(85.00,35.00)}\pgflineto{\pgfxy(82.20,35.70)}\pgflineto{\pgfxy(82.20,34.30)}\pgflineto{\pgfxy(85.00,35.00)}\pgfclosepath\pgffill
\pgfmoveto{\pgfxy(85.00,35.00)}\pgflineto{\pgfxy(82.20,35.70)}\pgflineto{\pgfxy(82.20,34.30)}\pgflineto{\pgfxy(85.00,35.00)}\pgfclosepath\pgfstroke
\pgfcircle[fill]{\pgfxy(110.00,50.00)}{1.00mm}
\pgfsetlinewidth{0.30mm}\pgfcircle[stroke]{\pgfxy(110.00,50.00)}{1.00mm}
\pgfcircle[fill]{\pgfxy(100.00,40.00)}{1.00mm}
\pgfcircle[stroke]{\pgfxy(100.00,40.00)}{1.00mm}
\pgfcircle[fill]{\pgfxy(110.00,40.00)}{1.00mm}
\pgfcircle[stroke]{\pgfxy(110.00,40.00)}{1.00mm}
\pgfcircle[fill]{\pgfxy(120.00,40.00)}{1.00mm}
\pgfcircle[stroke]{\pgfxy(120.00,40.00)}{1.00mm}
\pgfcircle[fill]{\pgfxy(120.00,30.00)}{1.00mm}
\pgfcircle[stroke]{\pgfxy(120.00,30.00)}{1.00mm}
\pgfcircle[fill]{\pgfxy(100.00,30.00)}{1.00mm}
\pgfcircle[stroke]{\pgfxy(100.00,30.00)}{1.00mm}
\pgfmoveto{\pgfxy(110.00,50.00)}\pgflineto{\pgfxy(110.00,40.00)}\pgfstroke
\pgfmoveto{\pgfxy(110.00,50.00)}\pgflineto{\pgfxy(120.00,40.00)}\pgfstroke
\pgfmoveto{\pgfxy(120.00,40.00)}\pgflineto{\pgfxy(120.00,30.00)}\pgfstroke
\pgfmoveto{\pgfxy(110.00,50.00)}\pgflineto{\pgfxy(100.00,40.00)}\pgfstroke
\pgfmoveto{\pgfxy(100.00,40.00)}\pgflineto{\pgfxy(100.00,30.00)}\pgfstroke
\pgfputat{\pgfxy(102.00,39.00)}{\pgfbox[bottom,left]{\fontsize{11.38}{13.66}\selectfont 8}}
\pgfputat{\pgfxy(122.00,29.00)}{\pgfbox[bottom,left]{\fontsize{11.38}{13.66}\selectfont 3}}
\pgfputat{\pgfxy(122.00,39.00)}{\pgfbox[bottom,left]{\fontsize{11.38}{13.66}\selectfont 4}}
\pgfputat{\pgfxy(113.00,49.00)}{\pgfbox[bottom,left]{\fontsize{11.38}{13.66}\selectfont 9}}
\pgfputat{\pgfxy(102.00,29.00)}{\pgfbox[bottom,left]{\fontsize{11.38}{13.66}\selectfont 1}}
\pgfputat{\pgfxy(112.00,39.00)}{\pgfbox[bottom,left]{\fontsize{11.38}{13.66}\selectfont 2}}
\pgfcircle[stroke]{\pgfxy(40.00,50.00)}{2.00mm}
\pgfcircle[stroke]{\pgfxy(110.00,50.00)}{2.00mm}
\end{pgfpicture}%
\else
  \setlength{\unitlength}{1bp}%
  \begin{picture}(308.98, 107.72)(0,0)
  \put(0,0){\includegraphics{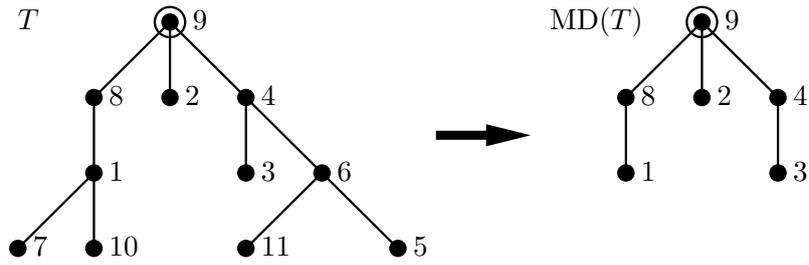}}
  \put(42.52,64.81){\fontsize{11.38}{13.66}\selectfont 8}
  \put(14.17,8.12){\fontsize{11.38}{13.66}\selectfont 7}
  \put(99.21,8.12){\fontsize{11.38}{13.66}\selectfont 11}
  \put(99.21,36.46){\fontsize{11.38}{13.66}\selectfont 3}
  \put(42.52,8.12){\fontsize{11.38}{13.66}\selectfont 10}
  \put(155.91,8.12){\fontsize{11.38}{13.66}\selectfont 5}
  \put(99.21,64.81){\fontsize{11.38}{13.66}\selectfont 4}
  \put(127.56,36.46){\fontsize{11.38}{13.66}\selectfont 6}
  \put(73.70,93.16){\fontsize{11.38}{13.66}\selectfont 9}
  \put(42.52,36.46){\fontsize{11.38}{13.66}\selectfont 1}
  \put(70.87,64.81){\fontsize{11.38}{13.66}\selectfont 2}
  \put(8.50,93.16){\fontsize{11.38}{13.66}\selectfont $T$}
  \put(206.93,93.16){\fontsize{11.38}{13.66}\selectfont $\MD(T)$}
  \put(240.94,64.81){\fontsize{11.38}{13.66}\selectfont 8}
  \put(297.64,36.46){\fontsize{11.38}{13.66}\selectfont 3}
  \put(297.64,64.81){\fontsize{11.38}{13.66}\selectfont 4}
  \put(272.13,93.16){\fontsize{11.38}{13.66}\selectfont 9}
  \put(240.94,36.46){\fontsize{11.38}{13.66}\selectfont 1}
  \put(269.29,64.81){\fontsize{11.38}{13.66}\selectfont 2}
  \end{picture}%
\fi
\caption{The maximal decreasing subtree of the ternary labeled tree $T$}
\seo{label $0$을 $11$로 바꿀 것. 첫번째 그림 코멘트와 같은 맥락으로 root 표시}
\shin{바꾸었습니다.}
\label{fig:tree}
\end{figure}

In this paper we present a formula for $|\Tp_{n,k}|$, which makes a refined enumeration of $\Tp_n$, or a generalization of equation~\eqref{eq:pcat}. Note that a similar refinement for rooted labeled trees and ordered labeled trees were done before (see \cite{SS12a, SS12b}), but the $p$-ary case is much more complicated and has quite different features.

\section{Main results}

From now on we will consider labeled trees only. So we will omit the word ``labeled".
Recall that $\Tp_{n,k}$ is the set of $p$-ary trees on $[n]$ with its maximal decreasing ordered subtree having $k$ vertices. Let $\Y_{n,k}$ be the set of $p$-ary trees $T$ on $[n]$, where $T$ is given by attaching additional $(n-k)$ increasing leaves to a decreasing tree with $k$ vertices. Let $\F_{n,k}$ be the set of (non-ordered) forests on $[n]$ consisting of $k$
$p$-ary trees, where the $k$ roots are not ordered. In Figure~\ref{fig:forest}, the first two forests are the same, but the third one is a different forest in $\mathcal{F}^{(2)}_{4,2}$.

\begin{figure}[h]
\centering
\ifpdf
\begin{pgfpicture}{-4.00mm}{-3.86mm}{111.00mm}{14.14mm}
\pgfsetxvec{\pgfpoint{1.00mm}{0mm}}
\pgfsetyvec{\pgfpoint{0mm}{1.00mm}}
\color[rgb]{0,0,0}\pgfsetlinewidth{0.30mm}\pgfsetdash{}{0mm}
\pgfcircle[fill]{\pgfxy(0.00,0.00)}{1.00mm}
\pgfcircle[stroke]{\pgfxy(0.00,0.00)}{1.00mm}
\pgfcircle[fill]{\pgfxy(10.00,0.00)}{1.00mm}
\pgfcircle[stroke]{\pgfxy(10.00,0.00)}{1.00mm}
\pgfcircle[fill]{\pgfxy(0.00,10.00)}{1.00mm}
\pgfcircle[stroke]{\pgfxy(0.00,10.00)}{1.00mm}
\pgfcircle[fill]{\pgfxy(15.00,10.00)}{1.00mm}
\pgfcircle[stroke]{\pgfxy(15.00,10.00)}{1.00mm}
\pgfmoveto{\pgfxy(0.00,10.00)}\pgflineto{\pgfxy(0.00,0.00)}\pgfstroke
\pgfmoveto{\pgfxy(0.00,10.00)}\pgflineto{\pgfxy(10.00,0.00)}\pgfstroke
\pgfcircle[stroke]{\pgfxy(0.00,10.00)}{2.00mm}
\pgfcircle[stroke]{\pgfxy(15.00,10.00)}{2.00mm}
\pgfcircle[fill]{\pgfxy(50.00,0.00)}{1.00mm}
\pgfcircle[stroke]{\pgfxy(50.00,0.00)}{1.00mm}
\pgfcircle[fill]{\pgfxy(60.00,0.00)}{1.00mm}
\pgfcircle[stroke]{\pgfxy(60.00,0.00)}{1.00mm}
\pgfcircle[fill]{\pgfxy(50.00,10.00)}{1.00mm}
\pgfcircle[stroke]{\pgfxy(50.00,10.00)}{1.00mm}
\pgfmoveto{\pgfxy(50.00,10.00)}\pgflineto{\pgfxy(50.00,0.00)}\pgfstroke
\pgfmoveto{\pgfxy(51.00,10.00)}\pgflineto{\pgfxy(60.00,0.00)}\pgfstroke
\pgfcircle[stroke]{\pgfxy(50.00,10.00)}{2.00mm}
\pgfcircle[fill]{\pgfxy(35.00,10.00)}{1.00mm}
\pgfcircle[stroke]{\pgfxy(35.00,10.00)}{1.00mm}
\pgfcircle[stroke]{\pgfxy(35.00,10.00)}{2.00mm}
\pgfcircle[fill]{\pgfxy(95.00,0.00)}{1.00mm}
\pgfcircle[stroke]{\pgfxy(95.00,0.00)}{1.00mm}
\pgfcircle[fill]{\pgfxy(105.00,0.00)}{1.00mm}
\pgfcircle[stroke]{\pgfxy(105.00,0.00)}{1.00mm}
\pgfcircle[fill]{\pgfxy(95.00,10.00)}{1.00mm}
\pgfcircle[stroke]{\pgfxy(95.00,10.00)}{1.00mm}
\pgfmoveto{\pgfxy(95.00,10.00)}\pgflineto{\pgfxy(95.00,0.00)}\pgfstroke
\pgfmoveto{\pgfxy(95.00,10.00)}\pgflineto{\pgfxy(105.00,0.00)}\pgfstroke
\pgfcircle[stroke]{\pgfxy(95.00,10.00)}{2.00mm}
\pgfcircle[fill]{\pgfxy(80.00,10.00)}{1.00mm}
\pgfcircle[stroke]{\pgfxy(80.00,10.00)}{1.00mm}
\pgfcircle[stroke]{\pgfxy(80.00,10.00)}{2.00mm}
\pgfputat{\pgfxy(13.00,-1.00)}{\pgfbox[bottom,left]{\fontsize{11.38}{13.66}\selectfont \makebox[0pt]{$1$}}}
\pgfputat{\pgfxy(63.00,-1.00)}{\pgfbox[bottom,left]{\fontsize{11.38}{13.66}\selectfont \makebox[0pt]{$1$}}}
\pgfputat{\pgfxy(92.00,-1.00)}{\pgfbox[bottom,left]{\fontsize{11.38}{13.66}\selectfont \makebox[0pt]{$1$}}}
\pgfputat{\pgfxy(19.00,9.00)}{\pgfbox[bottom,left]{\fontsize{11.38}{13.66}\selectfont \makebox[0pt]{$3$}}}
\pgfputat{\pgfxy(39.00,9.00)}{\pgfbox[bottom,left]{\fontsize{11.38}{13.66}\selectfont \makebox[0pt]{$3$}}}
\pgfputat{\pgfxy(84.00,9.00)}{\pgfbox[bottom,left]{\fontsize{11.38}{13.66}\selectfont \makebox[0pt]{$3$}}}
\pgfputat{\pgfxy(4.00,9.00)}{\pgfbox[bottom,left]{\fontsize{11.38}{13.66}\selectfont \makebox[0pt]{$2$}}}
\pgfputat{\pgfxy(54.00,9.00)}{\pgfbox[bottom,left]{\fontsize{11.38}{13.66}\selectfont \makebox[0pt]{$2$}}}
\pgfputat{\pgfxy(99.00,9.00)}{\pgfbox[bottom,left]{\fontsize{11.38}{13.66}\selectfont \makebox[0pt]{$2$}}}
\pgfputat{\pgfxy(3.00,-1.00)}{\pgfbox[bottom,left]{\fontsize{11.38}{13.66}\selectfont \makebox[0pt]{$4$}}}
\pgfputat{\pgfxy(53.00,-1.00)}{\pgfbox[bottom,left]{\fontsize{11.38}{13.66}\selectfont \makebox[0pt]{$4$}}}
\pgfputat{\pgfxy(108.00,-1.00)}{\pgfbox[bottom,left]{\fontsize{11.38}{13.66}\selectfont \makebox[0pt]{$4$}}}
\pgfputat{\pgfxy(25.00,4.00)}{\pgfbox[bottom,left]{\fontsize{11.38}{13.66}\selectfont \makebox[0pt]{$=$}}}
\pgfputat{\pgfxy(70.00,4.00)}{\pgfbox[bottom,left]{\fontsize{11.38}{13.66}\selectfont \makebox[0pt]{$\neq$}}}
\end{pgfpicture}%
\else
  \setlength{\unitlength}{1bp}%
  \begin{picture}(325.98, 51.02)(0,0)
  \put(0,0){\includegraphics{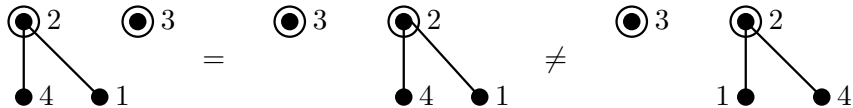}}
  \put(48.19,8.12){\fontsize{11.38}{13.66}\selectfont \makebox[0pt]{$1$}}
  \put(189.92,8.12){\fontsize{11.38}{13.66}\selectfont \makebox[0pt]{$1$}}
  \put(272.13,8.12){\fontsize{11.38}{13.66}\selectfont \makebox[0pt]{$1$}}
  \put(65.20,36.46){\fontsize{11.38}{13.66}\selectfont \makebox[0pt]{$3$}}
  \put(121.89,36.46){\fontsize{11.38}{13.66}\selectfont \makebox[0pt]{$3$}}
  \put(249.45,36.46){\fontsize{11.38}{13.66}\selectfont \makebox[0pt]{$3$}}
  \put(22.68,36.46){\fontsize{11.38}{13.66}\selectfont \makebox[0pt]{$2$}}
  \put(164.41,36.46){\fontsize{11.38}{13.66}\selectfont \makebox[0pt]{$2$}}
  \put(291.97,36.46){\fontsize{11.38}{13.66}\selectfont \makebox[0pt]{$2$}}
  \put(19.84,8.12){\fontsize{11.38}{13.66}\selectfont \makebox[0pt]{$4$}}
  \put(161.57,8.12){\fontsize{11.38}{13.66}\selectfont \makebox[0pt]{$4$}}
  \put(317.48,8.12){\fontsize{11.38}{13.66}\selectfont \makebox[0pt]{$4$}}
  \put(82.20,22.29){\fontsize{11.38}{13.66}\selectfont \makebox[0pt]{$=$}}
  \put(209.76,22.29){\fontsize{11.38}{13.66}\selectfont \makebox[0pt]{$\neq$}}
  \end{picture}%
\fi
\caption{Forests in $\mathcal{F}^{(3)}_{4,2}$}
\seo{두번째 세번째 forest는 component가 너무 붙어있는 듯. 3과 2 사이에 좀 더 간격을..}
\shin{적용했습니다.}
\label{fig:forest}
\end{figure}

Define the numbers
\begin{align*}
t(n,k) &= \left|\Tp_{n,k}\right|,\\
y(n,k) &= \left|\Y_{n,k}\right|,\\
f(n,k) &= \left|\F_{n,k}\right|.
\end{align*}
We will show that a $p$-ary tree can be ``decomposed" into a $p$-ary tree in $\bigcup_{n,k}\Y_{n,k}$ and a~forest in $\bigcup_{n,k}\F_{n,k}$. Thus it is important to count the numbers $y(n,k)$ and $f(n,k)$.

\begin{lem} \label{lem:znk}
For $n \ge 2$, the number $y(n,k)$ satisfies the recursion:
\begin{align}\label{eq:recurZ}
y(n,k) =
\sum_{m=0}^p \binom{n-1}{m} \binom{p}{m}m! \left((k-1)p-n+m+2\right)\,y(n-m-1,k-1)
\quad \text{for $1\le k <n$}
\end{align}
with the following boundary conditions:
\begin{align}
y(n,n) &= \prod_{j=0}^{n-1}(1+(p-1)j) \qquad \text{for $n \ge 1$} \label{eq:Z3}\\
y(n,k) &=0 \qquad \text{for $k < \max\left(\frac{n-1}{p},1\right)$} \label{eq:Z2}.
\end{align}
\end{lem}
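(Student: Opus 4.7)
The plan is to prove the recursion \eqref{eq:recurZ} by decomposing a tree $T \in \Y_{n,k}$ according to the vertex labeled $1$. First I observe that $1$ must lie in the maximal decreasing subtree $D$: if $1$ were one of the $n-k$ attached increasing leaves, its parent would require a strictly smaller label, which is impossible. Moreover, $1$ must be a leaf of $D$, since a child of $1$ inside $D$ would also require a smaller label. Let $m \in \{0, 1, \ldots, p\}$ be the number of children of $1$ in $T$; each such child is then an attached increasing leaf whose label is any element of $[n]\setminus\{1\}$.

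To reconstruct $T$ after removing the vertex $1$ together with its $m$ children, I would record four pieces of data: (i) the set of labels of the $m$ children of $1$, contributing $\binom{n-1}{m}$ choices; (ii) their assignment to $m$ of the $p$ ordered child slots of $1$, contributing $\binom{p}{m} m!$ ways; (iii) the order-preserving relabeling of the remaining tree $T'$ to $[n-m-1]$, which lies in $\Y_{n-m-1,k-1}$ and contributes $y(n-m-1,k-1)$; and (iv) the slot previously occupied by $1$. For (iv), since $1$ was a leaf of $D$, this slot lies within $D' := D \setminus \{1\}$. Now $D'$ has $p(k-1)$ total child slots; of these, $k-2$ are occupied by non-root vertices of $D'$, and $n-m-k$ are occupied by the increasing leaves of $T'$ (each of which was attached to $D$ in $T$ and therefore to $D'$ in $T'$, since none of them were children of $1$). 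Thus the number of empty slots of $D'$ available for $1$ is $p(k-1) - (n-m-2) = (k-1)p - n + m + 2$. Multiplying these four factors and summing over $m$ yields \eqref{eq:recurZ}.

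For the boundary \eqref{eq:Z3}, I note that $y(n,n)$ counts decreasing $p$-ary trees on $[n]$: I would build such a tree by inserting the labels $n-1, n-2, \ldots, 1$ in this order into the singleton tree on $\{n\}$, observing that a decreasing $p$-ary tree on $j$ vertices has exactly $(p-1)j+1$ empty child slots available for the next insertion, which telescopes to $\prod_{j=0}^{n-1}(1+(p-1)j)$. The boundary \eqref{eq:Z2} follows from the same slot count: the $n-k$ attached leaves must fit into the $(p-1)k+1$ empty slots of a decreasing tree on $k$ vertices, forcing $k \geq (n-1)/p$, while $k \geq 1$ is needed for a nonempty root. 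The main delicacy of the argument is the slot enumeration in step (iv); in particular, one must verify that every increasing leaf of $T'$ is still attached to a vertex of $D'$ and not left dangling, which holds precisely because none of them were among the removed children of the vertex $1$.
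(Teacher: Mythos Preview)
Your proof is correct and follows essentially the same approach as the paper: both decompose a tree in $\Y_{n,k}$ according to the vertex labeled $1$, observe that $1$ is a leaf of the maximal decreasing subtree with $m$ increasing-leaf children, remove $1$ together with these $m$ children, and count the $(k-1)p - n + m + 2$ empty slots of $D' = D\setminus\{1\}$ available for reinserting $1$. The only organizational difference is that the paper first treats the case $m=0$ (vertex $1$ a leaf of the whole tree) and then reduces the case $m\ge 1$ to it, whereas you handle all $m$ simultaneously; your boundary argument for \eqref{eq:Z3} via successive insertion is slightly more explicit than the paper's citation of \cite{BFS92}, but the substance is identical.
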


\begin{proof}
Consider a tree $Y$ in $\Y_{n,k}$. The tree $Y$ with $n$ vertices consists of its maximal decreasing tree with $k$ vertices and the number of increasing leaves is $n-k$.
Note that the vertex $1$ is always contained in $\MD(Y)$.

If the vertex $1$ is a leaf of $Y$, consider the tree $Y'$ by deleting the leaf $1$ from $Y$.
The number of vertices in $Y'$ and $\MD(Y')$ are $n-1$ and $k-1$, respectively.
So the number of possible trees $Y'$ is $y(n-1,k-1)$.
Since we cannot attach the vertex $1$ to $n-k$ increasing leaves of $Y'$, there are $(k-1)p-(n-2)$ ways of recovering $Y$.
Thus the number of $Y$ with the leaf $1$ is
\begin{equation} \label{eq:1-leaf}
((k-1)p-n+2) \cdot y(n-1,k-1).
\end{equation}

If the vertex $1$ is not a leaf of $Y$, then the vertex $1$ has increasing leaves $\ell_1,\dots,\ell_m$, where $1\le m\le p$.
Consider the tree $Y''$ obtained by deleting $\ell_1,\dots,\ell_m$ from $Y$. Clearly $1$ is a leaf of $Y''$ and the number of vertices in $Y''$ and $\MD(Y'')$ are $n-m$ and $k$, respectively.
Thus by \eqref{eq:1-leaf}, the number of possible trees $Y''$ is $((k-1)p-(n-m)+2) \cdot y(n-m-1,k-1)$.
To recover $Y$ is to relabel all the vertices except $1$ of $Y''$ with the label set $\set{2,3,\dots,n} \setminus \set {\ell_1,\dots,\ell_m}$ and to attach the leaves $\ell_1,\dots,\ell_m$ to the vertex $1$ of $Y''$.
 Clearly ${\ell_1,\dots,\ell_m}$ is a subset of $\set{2,3,\dots,n}$. It is obvious that a way of attaching $\ell_1,\dots,\ell_m$ to vertex $1$ can be regarded as an injection from ${\ell_1,\dots,\ell_m}$ to $[p]$.
Thus the number of $Y$ without the leaf $1$ is
\begin{equation} \label{eq:1-nonleaf}
\binom{n-1}{m} \binom{p}{m}m! ((k-1)p-(n-m)+2) \cdot y(n-m-1,k-1)\,.
\end{equation}
Since $m$ may be the number from $1$ to $p$ and substituting $m=0$ in \eqref{eq:1-nonleaf} yields \eqref{eq:1-leaf}, we have the recursion \eqref{eq:recurZ}.

Since $\Y_{n,n}$ is the set of decreasing $p$-ary trees on $[n]$, the equation \eqref{eq:Z3} holds (see~\cite{BFS92}).
If the inequality $pk-(k-1)<n-k$ holds, $\Y_{n,k}$ should be empty. For $n\ge 1 $ and $k=0$, $\Y_{n,k}$ is also empty. Thus the equation \eqref{eq:Z2} also holds.
\end{proof}
The table for $y(n,k)$ with $p=2$ is shown in Table~\ref{table:ynk}.
\begin{table}[t]
{\footnotesize
\begin{tabular}{c|rrrrrrrrrrr}
$n\backslash k$ &0&1&2&3&4&5&6&7&8&9&10\\ \hline
0&1&&&&&&&&&&\\
1&0&1&&&&&&&&&\\
2&0&2&2&&&&&&&&\\
3&0&2&10&6&&&&&&&\\
4&0&0&24&56&24&&&&&&\\
5&0&0&24&256&360&120&&&&&\\
6&0&0&0&640&2672&2640&720&&&&\\
7&0&0&0&720&11824&28896&21840&5040&&&\\
8&0&0&0&0&30464&196352&330624&201600&40320&&\\
9&0&0&0&0&35840&857728&3177600&4032000&2056320&362880&\\
10&0&0&0&0&0&2251008&20640512&51901440&52496640&22982400&3628800\\
\end{tabular}
}
\vskip 1em
\caption{$y(n,k)$ with $p=2$}
\label{table:ynk}
\seo{table과 caption이 너무 붙어있는 느낌.. upper triangular 부분의 0은 생략해도 좋을 듯.}
\shin{빈 줄 넣음.}
\end{table}


Now we calculate $f(n,k)$ which is the number of forests on $[n]$ consisting of $k$ $p$-ary trees, where the $k$ components are not ordered. Here we use the convention that the empty product is~$1$.
\begin{lem} \label{lem:fnk}
For $0\le k\le n$, we have
\begin{align}
f(n,k) = {n \choose k} pk \prod_{i=1}^{n-k-1}(pn-i) \quad\text{if $n>k$},  \label{eq:num_f}
\end{align}
else $f(n,n) = 1$.
\end{lem}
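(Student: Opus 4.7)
My plan is to derive the closed form via the exponential generating function of $p$-ary labeled trees combined with Lagrange inversion, since the Fuss--Catalan shape of the answer is exactly what that machinery delivers.

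Let $T(x)=\sum_{n\ge 1}|\Tp_n|\,x^n/n!$. A non-empty $p$-ary labeled tree consists of a root together with an ordered $p$-tuple of (possibly empty) $p$-ary subtrees on the remaining labels, which by the standard labeled symbolic method gives the functional equation $T(x)=x(1+T(x))^p$. Since a forest in $\F_{n,k}$ is an unordered collection of $k$ non-empty $p$-ary trees whose vertex sets partition $[n]$, its EGF is $T(x)^k/k!$, so $f(n,k)=\frac{n!}{k!}\,[x^n]T(x)^k$. Applying Lagrange inversion with $\phi(y)=(1+y)^p$ then yields
\begin{equation*}
[x^n]T(x)^k=\frac{k}{n}\,[y^{n-k}](1+y)^{pn}=\frac{k}{n}\binom{pn}{n-k},
\end{equation*}
hence $f(n,k)=\frac{(n-1)!}{(k-1)!}\binom{pn}{n-k}$.

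It remains to match this to the product form in the lemma. Expanding $\binom{pn}{n-k}=\frac{pn\cdot\prod_{i=1}^{n-k-1}(pn-i)}{(n-k)!}$ and applying the elementary identity $\frac{(n-1)!}{(k-1)!(n-k)!}=\frac{k}{n}\binom{n}{k}$, the factors regroup into $\binom{n}{k}\,pk\,\prod_{i=1}^{n-k-1}(pn-i)$, as desired. The boundary case $n=k$ is immediate, since the only such forest is the one consisting of $n$ isolated roots, so $f(n,n)=1$ in agreement with the empty-product convention.

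The only genuinely non-routine step is the first one: the identification $T(x)=x(1+T(x))^p$. One must be careful that the $p$ sibling positions are distinguishable (hence the product $(1+T)^p$ rather than a symmetric construction) and that an empty subtree contributes $1$ to the generating function. Once that is in place, Lagrange inversion and the factorial bookkeeping are mechanical. A sanity check is that at $k=1$ the formula reduces to $pn\prod_{i=1}^{n-2}(pn-i)=(pn)_{(n-1)}$, which recovers equation~\eqref{eq:pcat} since $\F_{n,1}=\Tp_n$.
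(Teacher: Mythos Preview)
Your argument is correct. The functional equation $T(x)=x(1+T(x))^p$ is set up properly (ordered sibling slots, empty subtrees contributing $1$), the unordered-forest EGF $T(x)^k/k!$ is the right object, and Lagrange inversion plus the factorial bookkeeping are carried out cleanly. The edge case $k=0$ is also fine, since $[x^n]T(x)^0=0$ for $n\ge1$, even though your intermediate form $(n-1)!/(k-1)!$ is only written for $k\ge1$.

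Your route, however, is genuinely different from the paper's. The paper argues bijectively: it first selects the $k$ roots in $\binom{n}{k}$ ways, and then invokes the reverse Pr\"ufer algorithm of \cite{SS07} to insert the remaining $n-k$ vertices one at a time, the first insertion having $pk$ available slots and each subsequent one having $pn-1,\,pn-2,\dots,pn-n+k+1$ slots. This explains each factor in the product combinatorially and needs no generating functions at all. Your approach trades that combinatorial transparency for a completely mechanical derivation: once $T=x(1+T)^p$ is written down, Lagrange inversion delivers the Fuss--Catalan binomial $\binom{pn}{n-k}$ automatically, and the method would port unchanged to any family with a ``root plus $\phi$-structured children'' decomposition. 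Either proof is short; the paper's is more self-contained (no analytic machinery), while yours makes the link to the Fuss--Catalan numbers explicit via the binomial $\binom{pn}{n-k}$ before any repackaging into the stated product form.
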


\begin{proof}
Consider a forest $F$ in $\F_{n,k}$.
The forest $F$ consists of (non-ordered) $p$-ary trees $T_1,\ldots,T_k$ with roots $r_1, r_2, \ldots, r_k$, where $r_1 < r_2 < \cdots < r_k$.
The number of ways for choosing roots $r_1, r_2, \cdots, r_k$ from $[n]$ is equal to $n \choose k$.
From the \emph{reverse Pr\"ufer algorithm (RP Algorithm)} in \cite{SS07}, the number of ways for adding $n-k$ vertices successively to $k$ roots $r_1, r_2, \cdots, r_k$ is equal to
$$
pk (pn-1) (pn-2) \cdots (pn-n+k+1)
$$
for $ 0<  k < n$, thus the equation \eqref{eq:num_f} holds. For $0=k<n$, $\F_{n,0}$ is empty, so $f(n,0)=0$ included in \eqref{eq:num_f}.
For $0 \le k=n$,  $\F_{n,n}$ is the set of forests with no edges, so $f(n,n)=1$.
\end{proof}

Since the number $y(n,k)$ is determined by the recurrence relation \eqref{eq:recurZ} in Lemma~\ref{lem:znk}, we can count the number $t(n,k)$ with the
following theorem.

\begin{thm} \label{thm:main}
For $n \ge 1$, we have
\begin{align}
t(n,k) = \sum_{m=k}^{n} \binom{n}{m} \,\frac{m-k}{n-k} (pn-pk)_{(n-m)}\, y(m,k) \quad \text{if $1 \le k<n$,} \label{eq:num_o}
\end{align}
else $t(n,n) = \prod_{j=0}^{n-1}\,(pj-j+1)$, where $a_{(\ell)}:=a(a-1)\cdots(a-\ell+1)$ is a falling factorial.
\end{thm}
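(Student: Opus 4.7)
I would prove the formula by a bijective decomposition that exhibits each $T\in\Tp_{n,k}$ as a triple $(S,Y,F)$: a label subset $S\subseteq[n]$, a skeleton $Y$ on $S$ of $\Y_{m,k}$-type (where $m=|S|$), and a $p$-ary forest $F$ attached at the increasing leaves of $Y$. Summing over $m$ and using Lemma~\ref{lem:fnk} then yields~\eqref{eq:num_o}.

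Fix $n>k$ and $T\in\Tp_{n,k}$. Let $M$ be the vertex set of $\MD(T)$ (so $|M|=k$), and let $r_1,\dots,r_j$ be those children (in $T$) of $M$-vertices that lie outside $M$; by maximality of $\MD$, each $r_i$ is strictly larger than its parent. Put $S:=M\cup\{r_1,\dots,r_j\}$ and $m:=k+j$, and let $Y$ be the tree on $S$ obtained from $T$ by pruning every strict descendant of each $r_i$. Then $\MD(Y)=M$ and the $r_i$'s become increasing leaves of $Y$, so after relabeling $S\to[m]$ by relative order, $Y$ becomes a member of $\Y_{m,k}$. The subtrees of $T$ rooted at $r_1,\dots,r_j$ together form a $p$-ary forest $F$ on $[n]\setminus M$ with prescribed roots $r_1,\dots,r_j$. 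The map $T\mapsto(S,Y,F)$ is invertible: given such a triple, grafting each component of $F$ onto the matching increasing leaf of $Y$ produces a unique $T\in\Tp_{n,k}$ with $\MD(T)=M$, since the only edges between $M$ and $[n]\setminus M$ are the increasing-leaf attachments of $Y$.

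Counting the triples: there are $\binom{n}{m}$ choices of $S$; for each $S$, the number of $Y$'s on $S$ equals $y(m,k)$ because relabeling $S\to[m]$ by relative order gives a structure-preserving bijection with $\Y_{m,k}$; and by Lemma~\ref{lem:fnk} (more precisely, by the reverse Pr\"ufer argument used in its proof, which depends only on the sizes involved and not on which labels are declared roots), the number of $p$-ary forests on a fixed $(n-k)$-element label set with $j=m-k$ prescribed roots equals
$$
\frac{m-k}{n-k}\,(pn-pk)_{(n-m)}.
$$
For $m<n$ this follows by dropping the root-selection binomial factor from Lemma~\ref{lem:fnk} and using $(pn-pk)_{(n-m)}=p(n-k)\prod_{i=1}^{n-m-1}(p(n-k)-i)$ with $p(n-k)\cdot\tfrac{m-k}{n-k}=p(m-k)=pj$; for $m=n$ both sides equal $1$, matching $f(n-k,n-k)=1$. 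Multiplying the three counts and summing over $k\le m\le n$ gives~\eqref{eq:num_o} (the $m=k$ term vanishes via the factor $m-k$). For the boundary $n=k$, $\Tp_{n,n}=\Y_{n,n}$ and so $t(n,n)=y(n,n)=\prod_{j=0}^{n-1}(1+(p-1)j)=\prod_{j=0}^{n-1}(pj-j+1)$ by~\eqref{eq:Z3}.

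The main obstacle is verifying the bijection rigorously: one must confirm that $\MD$ of the reconstructed tree equals $M$ exactly (no accidental extensions), which rests on each $r_i$ being strictly greater than its $M$-parent, and one must justify that the forest count in Lemma~\ref{lem:fnk} depends only on the number of prescribed roots and not on their specific labels (a symmetry built into the reverse Pr\"ufer encoding). Once these are settled, combining $\binom{n}{m}$, $y(m,k)$, and the forest count is a short algebraic manipulation.
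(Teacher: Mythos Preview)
Your proposal is correct and follows essentially the same route as the paper: decompose $T\in\Tp_{n,k}$ into the tree $Y$ on $V(\MD(T))$ together with its increasing leaves (your $S$), and the residual forest on $[n]\setminus V(\MD(T))$ with roots prescribed by those leaves; then count with $\binom{n}{m}\cdot y(m,k)\cdot f(n-k,m-k)/\binom{n-k}{m-k}$. Your write-up is slightly more explicit about why the bijection inverts cleanly and why the forest count is independent of which labels are the prescribed roots, but the decomposition and the arithmetic are the same as the paper's.
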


\begin{proof}
Given a $p$-ary tree $T$ in $\Tp_{n,k}$, let $Y$ be the subtree of $T$ consisting of $\MD(T)$ and its increasing leaves.
If $Y$ has $m$ vertices, then $Y$ is a subtree of $T$ with $(m-k)$ increasing leaves.
Also, the induced subgraph $Z$ of $T$ generated by the $(n-k)$ vertices not belonging to $\MD(T)$ is a (non-ordered) forest consisting of $(m-k)$ $p$-ary trees whose roots are increasing leaves of $Y$.
Figure~\ref{fig:TYZ} illustrates the subgraph $Y$ and $Z$ of a given ternary tree $T$.
\begin{figure}[t]
\centering
\ifpdf
\begin{pgfpicture}{17.00mm}{-7.86mm}{151.00mm}{54.14mm}
\pgfsetxvec{\pgfpoint{1.00mm}{0mm}}
\pgfsetyvec{\pgfpoint{0mm}{1.00mm}}
\color[rgb]{0,0,0}\pgfsetlinewidth{0.30mm}\pgfsetdash{}{0mm}
\pgfcircle[fill]{\pgfxy(40.00,50.00)}{1.00mm}
\pgfcircle[stroke]{\pgfxy(40.00,50.00)}{1.00mm}
\pgfcircle[fill]{\pgfxy(30.00,40.00)}{1.00mm}
\pgfcircle[stroke]{\pgfxy(30.00,40.00)}{1.00mm}
\pgfcircle[fill]{\pgfxy(40.00,40.00)}{1.00mm}
\pgfcircle[stroke]{\pgfxy(40.00,40.00)}{1.00mm}
\pgfcircle[fill]{\pgfxy(50.00,40.00)}{1.00mm}
\pgfcircle[stroke]{\pgfxy(50.00,40.00)}{1.00mm}
\pgfcircle[fill]{\pgfxy(50.00,30.00)}{1.00mm}
\pgfcircle[stroke]{\pgfxy(50.00,30.00)}{1.00mm}
\pgfcircle[fill]{\pgfxy(60.00,30.00)}{1.00mm}
\pgfcircle[stroke]{\pgfxy(60.00,30.00)}{1.00mm}
\pgfcircle[fill]{\pgfxy(30.00,30.00)}{1.00mm}
\pgfcircle[stroke]{\pgfxy(30.00,30.00)}{1.00mm}
\pgfcircle[fill]{\pgfxy(50.00,20.00)}{1.00mm}
\pgfcircle[stroke]{\pgfxy(50.00,20.00)}{1.00mm}
\pgfcircle[fill]{\pgfxy(70.00,20.00)}{1.00mm}
\pgfcircle[stroke]{\pgfxy(70.00,20.00)}{1.00mm}
\pgfcircle[fill]{\pgfxy(30.00,20.00)}{1.00mm}
\pgfcircle[stroke]{\pgfxy(30.00,20.00)}{1.00mm}
\pgfcircle[fill]{\pgfxy(20.00,20.00)}{1.00mm}
\pgfcircle[stroke]{\pgfxy(20.00,20.00)}{1.00mm}
\pgfmoveto{\pgfxy(40.00,50.00)}\pgflineto{\pgfxy(40.00,40.00)}\pgfstroke
\pgfmoveto{\pgfxy(40.00,50.00)}\pgflineto{\pgfxy(50.00,40.00)}\pgfstroke
\pgfmoveto{\pgfxy(50.00,40.00)}\pgflineto{\pgfxy(60.00,30.00)}\pgfstroke
\pgfmoveto{\pgfxy(60.00,30.00)}\pgflineto{\pgfxy(70.00,20.00)}\pgfstroke
\pgfmoveto{\pgfxy(60.00,30.00)}\pgflineto{\pgfxy(50.00,20.00)}\pgfstroke
\pgfmoveto{\pgfxy(50.00,40.00)}\pgflineto{\pgfxy(50.00,30.00)}\pgfstroke
\pgfmoveto{\pgfxy(40.00,50.00)}\pgflineto{\pgfxy(30.00,40.00)}\pgfstroke
\pgfmoveto{\pgfxy(30.00,40.00)}\pgflineto{\pgfxy(30.00,30.00)}\pgfstroke
\pgfmoveto{\pgfxy(30.00,30.00)}\pgflineto{\pgfxy(30.00,20.00)}\pgfstroke
\pgfmoveto{\pgfxy(30.00,30.00)}\pgflineto{\pgfxy(20.00,20.00)}\pgfstroke
\pgfputat{\pgfxy(32.00,39.00)}{\pgfbox[bottom,left]{\fontsize{11.38}{13.66}\selectfont 8}}
\pgfputat{\pgfxy(22.00,19.00)}{\pgfbox[bottom,left]{\fontsize{11.38}{13.66}\selectfont 7}}
\pgfputat{\pgfxy(52.00,19.00)}{\pgfbox[bottom,left]{\fontsize{11.38}{13.66}\selectfont 11}}
\pgfputat{\pgfxy(52.00,29.00)}{\pgfbox[bottom,left]{\fontsize{11.38}{13.66}\selectfont 3}}
\pgfputat{\pgfxy(32.00,19.00)}{\pgfbox[bottom,left]{\fontsize{11.38}{13.66}\selectfont 10}}
\pgfputat{\pgfxy(72.00,19.00)}{\pgfbox[bottom,left]{\fontsize{11.38}{13.66}\selectfont 5}}
\pgfputat{\pgfxy(52.00,39.00)}{\pgfbox[bottom,left]{\fontsize{11.38}{13.66}\selectfont 4}}
\pgfputat{\pgfxy(62.00,29.00)}{\pgfbox[bottom,left]{\fontsize{11.38}{13.66}\selectfont 6}}
\pgfputat{\pgfxy(43.00,49.00)}{\pgfbox[bottom,left]{\fontsize{11.38}{13.66}\selectfont 9}}
\pgfputat{\pgfxy(32.00,29.00)}{\pgfbox[bottom,left]{\fontsize{11.38}{13.66}\selectfont 1}}
\pgfputat{\pgfxy(42.00,39.00)}{\pgfbox[bottom,left]{\fontsize{11.38}{13.66}\selectfont 2}}
\pgfputat{\pgfxy(20.00,49.00)}{\pgfbox[bottom,left]{\fontsize{11.38}{13.66}\selectfont $T$}}
\pgfsetlinewidth{1.20mm}\pgfmoveto{\pgfxy(80.00,35.00)}\pgflineto{\pgfxy(90.00,35.00)}\pgfstroke
\pgfmoveto{\pgfxy(90.00,35.00)}\pgflineto{\pgfxy(87.20,35.70)}\pgflineto{\pgfxy(87.20,34.30)}\pgflineto{\pgfxy(90.00,35.00)}\pgfclosepath\pgffill
\pgfmoveto{\pgfxy(90.00,35.00)}\pgflineto{\pgfxy(87.20,35.70)}\pgflineto{\pgfxy(87.20,34.30)}\pgflineto{\pgfxy(90.00,35.00)}\pgfclosepath\pgfstroke
\pgfsetlinewidth{0.30mm}\pgfcircle[stroke]{\pgfxy(40.00,50.00)}{2.00mm}
\pgfcircle[fill]{\pgfxy(115.00,50.00)}{1.00mm}
\pgfcircle[stroke]{\pgfxy(115.00,50.00)}{1.00mm}
\pgfcircle[fill]{\pgfxy(105.00,40.00)}{1.00mm}
\pgfcircle[stroke]{\pgfxy(105.00,40.00)}{1.00mm}
\pgfcircle[fill]{\pgfxy(115.00,40.00)}{1.00mm}
\pgfcircle[stroke]{\pgfxy(115.00,40.00)}{1.00mm}
\pgfcircle[fill]{\pgfxy(125.00,40.00)}{1.00mm}
\pgfcircle[stroke]{\pgfxy(125.00,40.00)}{1.00mm}
\pgfcircle[fill]{\pgfxy(125.00,30.00)}{1.00mm}
\pgfcircle[stroke]{\pgfxy(125.00,30.00)}{1.00mm}
\pgfcircle[fill]{\pgfxy(135.00,30.00)}{1.00mm}
\pgfcircle[stroke]{\pgfxy(135.00,30.00)}{1.00mm}
\pgfcircle[fill]{\pgfxy(105.00,30.00)}{1.00mm}
\pgfcircle[stroke]{\pgfxy(105.00,30.00)}{1.00mm}
\pgfcircle[fill]{\pgfxy(105.00,20.00)}{1.00mm}
\pgfcircle[stroke]{\pgfxy(105.00,20.00)}{1.00mm}
\pgfcircle[fill]{\pgfxy(95.00,20.00)}{1.00mm}
\pgfcircle[stroke]{\pgfxy(95.00,20.00)}{1.00mm}
\pgfmoveto{\pgfxy(115.00,50.00)}\pgflineto{\pgfxy(115.00,40.00)}\pgfstroke
\pgfmoveto{\pgfxy(115.00,50.00)}\pgflineto{\pgfxy(125.00,40.00)}\pgfstroke
\pgfmoveto{\pgfxy(125.00,40.00)}\pgflineto{\pgfxy(135.00,30.00)}\pgfstroke
\pgfmoveto{\pgfxy(125.00,40.00)}\pgflineto{\pgfxy(125.00,30.00)}\pgfstroke
\pgfmoveto{\pgfxy(115.00,50.00)}\pgflineto{\pgfxy(105.00,40.00)}\pgfstroke
\pgfmoveto{\pgfxy(105.00,40.00)}\pgflineto{\pgfxy(105.00,30.00)}\pgfstroke
\pgfmoveto{\pgfxy(105.00,30.00)}\pgflineto{\pgfxy(105.00,20.00)}\pgfstroke
\pgfmoveto{\pgfxy(105.00,30.00)}\pgflineto{\pgfxy(95.00,20.00)}\pgfstroke
\pgfputat{\pgfxy(107.00,39.00)}{\pgfbox[bottom,left]{\fontsize{11.38}{13.66}\selectfont 8}}
\pgfputat{\pgfxy(97.00,19.00)}{\pgfbox[bottom,left]{\fontsize{11.38}{13.66}\selectfont 7}}
\pgfputat{\pgfxy(127.00,29.00)}{\pgfbox[bottom,left]{\fontsize{11.38}{13.66}\selectfont 3}}
\pgfputat{\pgfxy(107.00,19.00)}{\pgfbox[bottom,left]{\fontsize{11.38}{13.66}\selectfont 10}}
\pgfputat{\pgfxy(127.00,39.00)}{\pgfbox[bottom,left]{\fontsize{11.38}{13.66}\selectfont 4}}
\pgfputat{\pgfxy(137.00,29.00)}{\pgfbox[bottom,left]{\fontsize{11.38}{13.66}\selectfont 6}}
\pgfputat{\pgfxy(118.00,49.00)}{\pgfbox[bottom,left]{\fontsize{11.38}{13.66}\selectfont 9}}
\pgfputat{\pgfxy(107.00,29.00)}{\pgfbox[bottom,left]{\fontsize{11.38}{13.66}\selectfont 1}}
\pgfputat{\pgfxy(117.00,39.00)}{\pgfbox[bottom,left]{\fontsize{11.38}{13.66}\selectfont 2}}
\pgfputat{\pgfxy(95.00,49.00)}{\pgfbox[bottom,left]{\fontsize{11.38}{13.66}\selectfont $Y$}}
\pgfcircle[stroke]{\pgfxy(115.00,50.00)}{2.00mm}
\pgfcircle[fill]{\pgfxy(135.00,6.00)}{1.00mm}
\pgfcircle[stroke]{\pgfxy(135.00,6.00)}{1.00mm}
\pgfcircle[fill]{\pgfxy(125.00,-4.00)}{1.00mm}
\pgfcircle[stroke]{\pgfxy(125.00,-4.00)}{1.00mm}
\pgfcircle[fill]{\pgfxy(145.00,-4.00)}{1.00mm}
\pgfcircle[stroke]{\pgfxy(145.00,-4.00)}{1.00mm}
\pgfcircle[fill]{\pgfxy(120.00,6.00)}{1.00mm}
\pgfcircle[stroke]{\pgfxy(120.00,6.00)}{1.00mm}
\pgfcircle[fill]{\pgfxy(106.00,6.00)}{1.00mm}
\pgfcircle[stroke]{\pgfxy(106.00,6.00)}{1.00mm}
\pgfmoveto{\pgfxy(135.00,6.00)}\pgflineto{\pgfxy(145.00,-4.00)}\pgfstroke
\pgfmoveto{\pgfxy(135.00,6.00)}\pgflineto{\pgfxy(125.00,-4.00)}\pgfstroke
\pgfputat{\pgfxy(109.00,5.00)}{\pgfbox[bottom,left]{\fontsize{11.38}{13.66}\selectfont 7}}
\pgfputat{\pgfxy(127.00,-5.00)}{\pgfbox[bottom,left]{\fontsize{11.38}{13.66}\selectfont 11}}
\pgfputat{\pgfxy(122.00,5.00)}{\pgfbox[bottom,left]{\fontsize{11.38}{13.66}\selectfont 10}}
\pgfputat{\pgfxy(147.00,-5.00)}{\pgfbox[bottom,left]{\fontsize{11.38}{13.66}\selectfont 5}}
\pgfputat{\pgfxy(138.00,5.00)}{\pgfbox[bottom,left]{\fontsize{11.38}{13.66}\selectfont 6}}
\pgfputat{\pgfxy(94.00,5.00)}{\pgfbox[bottom,left]{\fontsize{11.38}{13.66}\selectfont $Z$}}
\pgfcircle[stroke]{\pgfxy(106.00,6.00)}{2.00mm}
\pgfcircle[stroke]{\pgfxy(120.00,6.00)}{2.00mm}
\pgfcircle[stroke]{\pgfxy(135.00,6.00)}{2.00mm}
\pgfsetlinewidth{1.20mm}\pgfmoveto{\pgfxy(80.00,20.00)}\pgflineto{\pgfxy(90.00,14.00)}\pgfstroke
\pgfmoveto{\pgfxy(90.00,14.00)}\pgflineto{\pgfxy(87.96,16.04)}\pgflineto{\pgfxy(87.24,14.84)}\pgflineto{\pgfxy(90.00,14.00)}\pgfclosepath\pgffill
\pgfmoveto{\pgfxy(90.00,14.00)}\pgflineto{\pgfxy(87.96,16.04)}\pgflineto{\pgfxy(87.24,14.84)}\pgflineto{\pgfxy(90.00,14.00)}\pgfclosepath\pgfstroke
\end{pgfpicture}%
\else
  \setlength{\unitlength}{1bp}%
  \begin{picture}(379.84, 175.75)(0,0)
  \put(0,0){\includegraphics{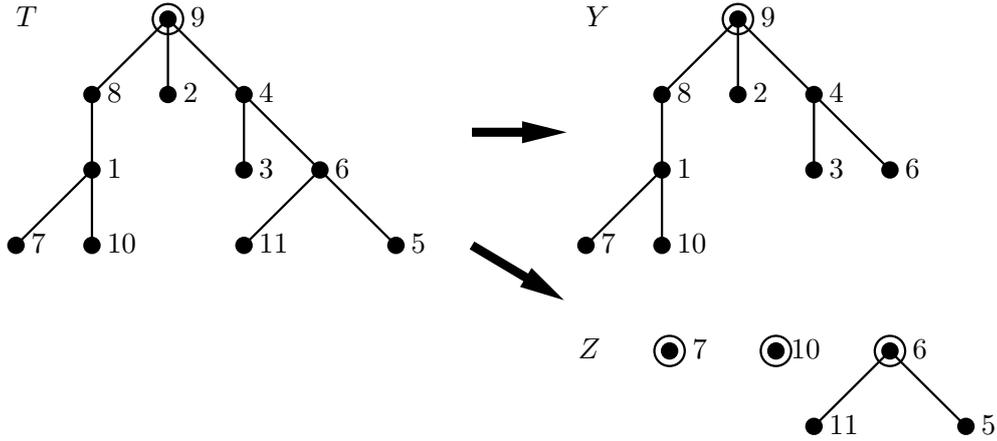}}
  \put(42.52,132.84){\fontsize{11.38}{13.66}\selectfont 8}
  \put(14.17,76.15){\fontsize{11.38}{13.66}\selectfont 7}
  \put(99.21,76.15){\fontsize{11.38}{13.66}\selectfont 11}
  \put(99.21,104.50){\fontsize{11.38}{13.66}\selectfont 3}
  \put(42.52,76.15){\fontsize{11.38}{13.66}\selectfont 10}
  \put(155.91,76.15){\fontsize{11.38}{13.66}\selectfont 5}
  \put(99.21,132.84){\fontsize{11.38}{13.66}\selectfont 4}
  \put(127.56,104.50){\fontsize{11.38}{13.66}\selectfont 6}
  \put(73.70,161.19){\fontsize{11.38}{13.66}\selectfont 9}
  \put(42.52,104.50){\fontsize{11.38}{13.66}\selectfont 1}
  \put(70.87,132.84){\fontsize{11.38}{13.66}\selectfont 2}
  \put(8.50,161.19){\fontsize{11.38}{13.66}\selectfont $T$}
  \put(255.12,132.84){\fontsize{11.38}{13.66}\selectfont 8}
  \put(226.77,76.15){\fontsize{11.38}{13.66}\selectfont 7}
  \put(311.81,104.50){\fontsize{11.38}{13.66}\selectfont 3}
  \put(255.12,76.15){\fontsize{11.38}{13.66}\selectfont 10}
  \put(311.81,132.84){\fontsize{11.38}{13.66}\selectfont 4}
  \put(340.16,104.50){\fontsize{11.38}{13.66}\selectfont 6}
  \put(286.30,161.19){\fontsize{11.38}{13.66}\selectfont 9}
  \put(255.12,104.50){\fontsize{11.38}{13.66}\selectfont 1}
  \put(283.46,132.84){\fontsize{11.38}{13.66}\selectfont 2}
  \put(221.10,161.19){\fontsize{11.38}{13.66}\selectfont $Y$}
  \put(260.79,36.46){\fontsize{11.38}{13.66}\selectfont 7}
  \put(311.81,8.12){\fontsize{11.38}{13.66}\selectfont 11}
  \put(297.64,36.46){\fontsize{11.38}{13.66}\selectfont 10}
  \put(368.50,8.12){\fontsize{11.38}{13.66}\selectfont 5}
  \put(342.99,36.46){\fontsize{11.38}{13.66}\selectfont 6}
  \put(218.27,36.46){\fontsize{11.38}{13.66}\selectfont $Z$}
  \end{picture}%
\fi
\caption{Decomposition of $T$ into $Y$ and $Z$}
\label{fig:TYZ}
\seo{여기에 새 그림}
\end{figure}

Now let us count the number of $p$-ary trees $T \in \Tp_{n,k}$ with $\abs{V(Y)}=m$ where $V(Y)$ is the set of vertices in $Y$.
First of all, the number of ways for selecting a set $V(Y) \subset [n]$  is equal to $n \choose m$.
By attaching $(m-k)$ increasing leaves to a decreasing $p$-ary tree with $k$ vertices, we can make a $p$-ary trees on $V(Y)$. So there are exactly $y(m,k)$ ways for making such a $p$-ary subtree on $V(Y)$.
Since all the roots of $Z$ are determined by $Y$, by the definition of $\F_{n,k}$ and Lemma~\ref{lem:fnk}, the number of ways for constructing the other parts on $V(T)\setminus V(\MD(T))$ is equal to
$$\left. f(n-k, m-k) \middle/  {n-k \choose m-k} \right. =  \frac{m-k}{n-k} (pn-pk)_{(n-m)}.$$
Since the range of $m$ is $k\le m \le n$, the equation \eqref{eq:num_o} holds.

Finally, $\Tp(n,n)$ is the set of decreasing $p$-ary trees on $[n]$, so
$$t(n,n) = y(n,n)=\prod_{j=0}^{n-1}\,(pj-j+1)$$
holds for $n \ge 1$.
\end{proof}
The sequence $t(n,k)$ with $p=2$ is listed in Table~\ref{table:tnk}. Note that each row sum is equal to $n!C^{(p)}_n$ with $p=2$.

\begin{table}[t]
{\footnotesize
\begin{tabular}{c|rrrrrrrrr|r}
$n\backslash k$ &0&1&2&3&4&5&6&7&8&$n!C_n$\\ \hline
0&1&&&&&&&&&1\\
1&0&1&&&&&&&&1\\
2&0&2&2&&&&&&&4\\
3&0&14&10&6&&&&&&30\\
4&0&152&104&56&24&&&&&336\\
5&0&2240&1504&816&360&120&&&&5040\\
6&0&41760&27744&15184&6992&2640&720&&&95040\\
7&0&942480&621936&342768&162240&65856&21840&5040&&2162160\\
8&0&24984960&16410240&9093888&4386304&1860224&680064&201600&40320&57657600\\
\end{tabular}
}
\vskip 1em
\caption{$t(n,k)$ with $p=2$}
\label{table:tnk}
\seo{삼각행렬 모습으로 보이는 것 중요할 듯. n=9,10을 생략하면 어떤지? 앞 테이블과 마찬가지로 upper triangulr 부분의 0 생략. 테이블과 켑션의 간격 조절 필요. $n!C_n$ 추가}
\end{table}



\begin{rmk}
Due to Lemma~\ref{lem:znk} and Theorem~\ref{thm:main}, we can calculate $t(n,k)$ for all $n$, $k$. In particular we express $t(n,k)$ as a linear combination of $y(k,k), y(k+1,k),\dots, y(n,k)$.
However a closed form, a recurrence relation, or a (double) generating function of $t(n,k)$ have not been found yet.
\seo{Remark가 Theorem 처럼 bold face이면 잘 보일 것 같음.}
\shin{적용 끝}
\end{rmk}

\section*{Acknowledgment}
For to the first authors,  this research was supported by Basic Science Research Program through the National Research Foundation of Korea (NRF) funded by the Ministry of Education (2011-0008683). For the second author, this research was supported by Basic Science Research Program through the National Research Foundation of Korea (NRF) funded by the Ministry of Science, ICT \& Future Planning (2012R1A1A1014154) and INHA UNIVERSITY Research Grant (INHA-44756).



\end{document}